\crefname{equation}{}{}
\colorlet{refkey}{orange!20}
\colorlet{labelkey}{blue!60}
\newtheorem{theorem}{Theorem}[section]
\newtheorem{lemma}[theorem]{Lemma}
\newtheorem{corollary}[theorem]{Corollary}
\theoremstyle{definition}
\newtheorem{question}[theorem]{Question}
\theoremstyle{remark}
\newcommand{\FF}{\mathbb{F}}
\newcommand{\oFF}{\overline{\FF}}
\title{Some remarks on the Zarankiewicz problem} 
\author{David Conlon}
\address{Department of Mathematics, California Institute of Technology, Pasadena, CA 91125, USA}
\email{dconlon@caltech.edu}
\begin{document}

\begin{abstract}
The Zarankiewicz problem asks for an estimate on $z(m, n; s, t)$, the largest number of $1$'s in an $m \times n$ matrix with all entries $0$ or $1$ containing no $s \times t$ submatrix consisting entirely of $1$'s. We show that a classical upper bound for $z(m, n; s, t)$ due to K\H{o}v\'ari, S\'os and Tur\'an is tight up to the constant for a broad range of parameters. The proof relies on a new quantitative variant of the random algebraic method.
\end{abstract}

\maketitle

\section{Introduction}

The classical Zarankiewicz problem~\cite{Z51} asks for an estimate on $z(m, n; s, t)$, the maximum number of edges in a bipartite graph $G = (U, V; E)$ with $|U| = m$ and $|V| = n$ containing no copy of $K_{s,t}$. The order here is important, in that the set of size $s$ in $K_{s,t}$ must be embedded in $U$, while the set of size $t$ must be in $V$. Equivalently, $z(m, n; s, t)$ is the largest number of $1$'s in an $m \times n$ matrix with all entries $0$ or $1$ containing no $s \times t$ submatrix consisting entirely of $1$'s.

A result of K\H{o}v\'ari, S\'os and Tur\'an~\cite{KST54} says that
\[z(m, n; s, t) = O(m n^{1-1/s} + n),\]
where the implied constant depends only on $s$ and $t$ (a convention that we adopt throughout). If we swap the roles of $m$ and $n$ and of $s$ and $t$, we also obtain the bound
\[z(m, n; s, t) = O(n m^{1-1/t} + m).\]
Assuming that $s \leq t$, the point where the second bound becomes better than the first is when $m$ is on the order of $n^{t/s}$. It turns out that this crossover point is critical to the problem, in that a lower bound for $z(n^{t/s}, n; s, t)$ which matches the upper bound up to a constant implies, by a simple sampling argument, a lower bound for $z(m, n; s, t)$ which is tight up to the constant for all $m$.

These observations give rise to the following attractive question. 

\begin{question} \label{qn:main}
Is it the case that for any fixed $s$ and $t$ with $2 \leq s \leq t$ and any $m \leq n^{t/s}$,
\[z(m, n; s, t) = \Omega(m n^{1-1/s})?\]
\end{question}

In light of this question, our current state of knowledge about lower bounds for the Zarankiewicz problem seems rather weak. The classic result in the area is due to Koll\'ar, R\'onyai and Szab\'o~\cite{KRSz}, who showed\footnote{Their result actually gives something considerably stronger, namely, a graph with $n$ vertices and $\Omega(n^{2-1/s})$ edges containing no copy of $K_{s,t}$ with no regard for how it is oriented in the graph. However, the corollary stated here is all that is relevant to our discussion.} that for any $s$ there exists $t$ such that
\[z(n, n; s, t) = \Omega(n^{2-1/s}).\]
In their work, it suffices to take $t \geq s! + 1$, a bound that was subsequently improved by Alon, R\'onyai and Szab\'o~\cite{ARSz} to $t \geq (s-1)! + 1$. Unfortunately, this only furnishes a complete answer to Question~\ref{qn:main} in two cases, when $s = t = 2$ and when $s = t = 3$ (and both of these cases were fully resolved much earlier~\cite{Bro66, Erd38}). However, a result of Alon, Mellinger, Mubayi and Verstra\"ete~\cite{AMMV12} shows that Question~\ref{qn:main} also has a positive answer for $s = 2$ and arbitrary $t$.

\begin{theorem}[Alon--Mellinger--Mubayi--Verstra\"ete~\cite{AMMV12}] \label{thm:2case}
For any fixed $t \geq 2$ and any $m \leq n^{t/2}$,
\[z(m, n; 2, t) = \Omega(m n^{1/2}).\]
\end{theorem}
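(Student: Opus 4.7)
The plan is to reduce to the extremal case $m = n^{t/2}$ by a sampling argument and then exhibit an explicit algebraic construction. For the reduction, suppose we have a $K_{2,t}$-free bipartite graph $H$ with parts of sizes $n^{t/2}$ and $n$ and $\Omega(n^{(t+1)/2})$ edges. Given $m \leq n^{t/2}$, choose a uniformly random subset $S$ of the large side of size $m$; the induced subgraph on $S \cup V(H)_{\mathrm{small}}$ is still $K_{2,t}$-free, and in expectation retains a $(m/n^{t/2})$-fraction of the edges, i.e., $\Omega(m n^{1/2})$ edges. Thus it suffices to build a suitable $H$.

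For the construction, let $q$ be a prime power with $q = \Theta(n^{1/2})$; by Bertrand's postulate (or denser results on primes) such a $q$ exists with $q^2 \leq n$, and the small factor lost here is absorbed into the implicit constant. Set $V = \mathbb{F}_q^2$ (so $|V| = q^2 \leq n$, and the remaining $n - q^2$ vertices on this side are added as isolated vertices) and
\[ U = \setcond{f \in \mathbb{F}_q[x]}{\deg f < t}, \]
a set of size $q^t \leq n^{t/2}$. Put an edge between $f \in U$ and $(a,b) \in V$ exactly when $f(a) = b$.

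There are three things to verify. First, the number of edges is $q \cdot |U| = q^{t+1} = \Theta(n^{(t+1)/2}) = \Theta(|U| \cdot n^{1/2})$, since each polynomial contributes its graph of size $q$. Second, the graph is $K_{2,t}$-free in the correct orientation: for any two distinct $f, g \in U$, their common neighborhood in $V$ is $\setcond{(a, f(a))}{f(a) = g(a)}$, whose size equals the number of roots of the nonzero polynomial $f - g$ of degree less than $t$, hence is at most $t - 1$. Third, if $|U|$ is strictly less than $m$ or $|V|$ is less than $n$, we pad with isolated vertices; this does not affect $K_{2,t}$-freeness or edge count.

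The main subtlety is really only bookkeeping: choosing $q$ so that $q^t$ and $q^2$ slot correctly into $m$ and $n$ while losing only constants, and checking that the sampling step survives the passage from $|U| = q^t$ (not exactly $n^{t/2}$) to arbitrary $m \leq n^{t/2}$. Both issues are handled by the flexibility of the $\Omega$-notation and the fact that consecutive prime powers differ multiplicatively by $1 + o(1)$. There is no genuine combinatorial obstacle: the polynomials-through-points incidence structure is classical, and the bound on common neighbors is just the bound on roots of a low-degree polynomial.
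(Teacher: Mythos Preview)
Your proof is correct. The paper itself cites this theorem from Alon--Mellinger--Mubayi--Verstra\"ete rather than proving it directly, but its proof of the main theorem (Theorem~\ref{thm:main}) specialized to $s=2$ is exactly your construction---univariate polynomials of degree less than $t$ over $\FF_q$ joined to their graphs in $\FF_q^2$, with $K_{2,t}$-freeness from the root bound and the passage to smaller $m$ by random sampling---so your argument coincides with the paper's approach (the paper's random inductive selection of the $f_i$ being vacuous when $s=2$).
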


In recent years, alternative proofs of the Koll\'ar--R\'onyai--Szab\'o theorem, though with weaker control on $t$, were found by Blagojevi\'c, Bukh and Karasev~\cite{BBK} and by Bukh~\cite{B15} using constructions where adjacency is determined by a randomly chosen algebraic variety. The first traces of this random algebraic method go back some way, to work of Matou\v sek~\cite{Mat} in discrepancy theory, but it is the variant originating with Bukh~\cite{B15}, and developed further by the author~\cite{Con}, that has proved most useful. For instance, it has led to considerable progress~\cite{BC, CJL, Jan, JJM, JMY, JQ, KKL18} on the celebrated rational exponents conjecture of Erd\H{o}s and Simonovits~\cite{E81}, amongst other applications~\cite{BG, CT}. Our main result, a general lower bound for $z(m, n; s, t)$ valid over a broad range of $m$, is another application of the random algebraic method, though in a new, arguably simpler, form that returns quantitative estimates not at present available through the application of Bukh's method.

\begin{theorem} \label{thm:main}
For any fixed $2 \leq s \leq t$ and any $m \leq n^{t^{1/(s-1)}/s(s-1)}$,
\[z(m, n; s, t) = \Omega(m n^{1-1/s}).\]
\end{theorem}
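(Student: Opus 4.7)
The plan is to apply the random algebraic method with a polynomial linear in the $U$-coordinates. Let $q$ be a prime power of order $n^{1/s}$, identify $V$ with a subset of $\mathbb{F}_q^s$ of size $n$, and set $k = \lceil t + t^{1/(s-1)} - 1 \rceil$. Take $U$ to be a uniformly random $m$-element subset of $\mathbb{F}_q^k$ (the hypothesis forces $m \leq q^k$). Let $d$ be the largest integer with $d^s < t$, and sample independent uniformly random polynomials $p_0, p_1, \dots, p_k \in \mathbb{F}_q[v_1, \dots, v_s]_{\leq d}$. Define the bipartite graph $G$ on $U \cup V$ by connecting $u = (u_1, \dots, u_k) \in U$ to $v \in V$ exactly when
\[F_u(v) := p_0(v) + u_1 p_1(v) + \dots + u_k p_k(v) = 0.\]
For each fixed $u$, $F_u$ is a uniformly random element of $\mathbb{F}_q[v]_{\leq d}$, so its zero set in $V$ has size $q^{s-1}(1 + o(1))$ by Lang--Weil, giving $\EE|E(G)| = \Theta(mn^{1-1/s})$.

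To bound $K_{s,t}$ copies, observe that for $s$ vertices $u_1, \dots, u_s \in U$ the polynomials $F_{u_i}$ all live in the $(k+1)$-dimensional space $W := \langle p_0, \dots, p_k \rangle$. Let $r$ denote the rank of the $s \times (k+1)$ matrix with rows $(1, u_{i,1}, \dots, u_{i,k})$; then $F_{u_1}, \dots, F_{u_s}$ span an $r$-dimensional subspace of $W$, and their common zero set coincides with the vanishing locus of any basis of this span. By Bezout combined with Lang--Weil, generic $r$-dimensional subspaces of $W$ cut out a variety of dimension $s - r$ and degree at most $d^r$, hence at most $d^r q^{s-r}$ $\mathbb{F}_q$-points. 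Generic $s$-tuples have $r = s$ and at most $d^s < t$ common neighbors, producing no $K_{s,t}$. The probability that a random $s$-tuple in $(\mathbb{F}_q^k)^s$ has augmented rank $\leq r$ is $O(q^{-(s-r)(k-r+1)})$, so
\[\EE[\#K_{s,t}] \lesssim \sum_{r=2}^{s-1} m^s q^{-(s-r)(k-r+1)} \binom{d^r q^{s-r}}{t} \lesssim \sum_{r=2}^{s-1} m^s q^{(s-r)(t - k + r - 1)}.\]
For our choice of $k$, the dominant stratum is $r = s - 1$, contributing $\lesssim m^s q^{t + s - 2 - k}$. Requiring this to be $o(mq^{s-1}) = o(\EE|E(G)|)$ gives $m^{s-1} \leq q^{k - t + 1}$, i.e.\ $m \leq n^{(k - t + 1)/s(s-1)} = n^{t^{1/(s-1)}/s(s-1)}$. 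Deleting one edge from each $K_{s,t}$ then leaves a $K_{s,t}$-free subgraph with $\Omega(mn^{1-1/s})$ edges.

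The main technical obstacle is two-fold. First, one must establish the Bezout--Lang--Weil bound $d^r q^{s - r}$ uniformly over the rank-$r$ stratum; this reduces to a quantitative random algebraic lemma saying that generic $r$-dimensional subspaces of $W$ consist of polynomials forming a regular sequence in $\mathbb{F}_q[v_1, \dots, v_s]$, which should follow by exhibiting the failure locus as a proper subvariety of bounded degree in the parameter space and bounding its $\mathbb{F}_q$-points. Second, one must verify that the intermediate strata $r \in \{2, \dots, s-2\}$ do not dominate; a short computation shows the stratum-$r$ bound on $\alpha = \log_n m$ is $\alpha_r = (s - 1 + (s - r)(k - t - r + 1))/s(s-1)$, and this is minimized at $r = s - 1$ precisely when $k \geq t + t^{1/(s-1)} - 1$, which is what pins down the exponent $t^{1/(s-1)}/s(s-1)$ appearing in the theorem.
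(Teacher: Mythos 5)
Your proposal takes the Bukh-style route (a linear family $F_u = p_0 + \sum_i u_i p_i$ indexed by $u \in \FF_q^k$, followed by deletion of excess $K_{s,t}$'s), which is genuinely different from the paper's argument. The paper attaches to each vertex $u_i$ its \emph{own} $(s-1)$-variate polynomial $f_i$ of degree $d = \lceil t^{1/(s-1)} \rceil - 1$, takes neighborhoods to be \emph{graphs} $\{(x, f_i(x))\}$ rather than zero sets, and constructs the $f_i$ one at a time by a union bound (Lemma~\ref{prob}) so that every $j$-wise intersection of the difference varieties $T_{i_1,i_j}$ has dimension at most $s-j$. This produces a graph that is exactly $K_{s,t}$-free — no deletion step — and the exponent $t^{1/(s-1)}$ comes directly from the degree of the $(s-1)$-variate polynomials.

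However, there is a fatal dimensional obstruction in your construction that you do not flag among your ``technical obstacles.'' You take $k = \lceil t + t^{1/(s-1)} - 1 \rceil$ parameters but draw the $p_i$ from $\FF_q[v_1, \dots, v_s]_{\leq d}$ with $d$ the largest integer satisfying $d^s < t$. The dimension of that polynomial space is $\binom{d+s}{s} \approx d^s/s! < t/s!$, which is strictly smaller than $k+1 \approx t + t^{1/(s-1)}$ for every $s \geq 2$ (already for $s=2$, $t=4$ you have $\binom{3}{2} = 3 < 6 = k+1$). Consequently $p_0, \dots, p_k$ are forced to be linearly dependent, and the step ``the matrix with rows $(1, u_{i,1}, \dots, u_{i,k})$ has rank $r$, hence $F_{u_1}, \dots, F_{u_s}$ span an $r$-dimensional subspace'' fails: the map $(a_0,\dots,a_k) \mapsto \sum a_i p_i$ has a large kernel, so a full-rank $s$-tuple $(u_1, \dots, u_s)$ can easily give rise to $F_{u_i}$ spanning a space of dimension below $s$, whose common zero set then has positive dimension and $\Theta(q)$ or more common neighbors. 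The whole stratification by matrix rank therefore does not control the $K_{s,t}$ count. One cannot fix this by increasing $d$: to make $\binom{d+s}{s} \geq k+1$ one needs $d^s \gtrsim s! \cdot t > t$, which destroys the B\'ezout bound $d^s < t$ at the generic rank-$s$ stratum. This tension between the degree needed for B\'ezout and the number of parameters needed for the range of $m$ is exactly what the paper sidesteps by abandoning the linear parametrization and instead choosing independent polynomials sequentially; it is also why the paper's remark notes that the $s$-variate variant only yields $m \leq n^{t^{1/s}/s(s-1)}$ and one must pass to $(s-1)$-variate graphs to reach $t^{1/(s-1)}$.
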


This may be seen as partial progress on Question~\ref{qn:main}, even if it leaves considerable room for improvement (except in the $s = 2$ case, where it agrees with Theorem~\ref{thm:2case}). On the other hand, we note that the only previous results dealing with lower bounds when $s \geq 3$ and $m = \omega(n)$ are a result of Matou\v sek~\cite{Mat} and a result of Alon, R\'onyai and Szab\'o~\cite{ARSz}, which subsumes that of Matou\v sek, saying that the conclusion of Theorem~\ref{thm:main} holds for any fixed $s \geq 2$ and $t \geq s! + 1$ and any $m \leq n^{1+1/s}$. Our result only begins to match the Alon--R\'onyai--Szab\'o result when $t$ is roughly $s^{2s}$, but rapidly improves on it for larger $t$.

\section{Random polynomials and varieties}

Let $q$ be a prime power and let $\mathbb{F}_q$ be the finite field of order $q$. We will consider polynomials in $t$ variables over $\mathbb{F}_q$, writing any such polynomial as $f(X)$, where $X = (X_1, \dots, X_t)$. We let $\mathcal{P}_d$ be the set of polynomials in $X$ of degree at most $d$, that is, the set of linear combinations over $\mathbb{F}_q$ of monomials of the form $X_1^{a_1} \cdots X_t^{a_t}$ with $\sum_{i=1}^t a_i \leq d$. By a random polynomial, we just mean a polynomial chosen uniformly from the set $\mathcal{P}_d$. One may produce such a random polynomial by taking the coefficients of the monomials above to be independent random elements of $\mathbb{F}_q$.

The next lemma estimates the probability that a randomly chosen polynomial from $\mathcal{P}_d$ passes through each of $m$ distinct points. This is very similar to a result of Bukh~\cite{B15}, but there is a crucial difference, in that we are interested in whether our random polynomial passes through points in $\oFF_q^t$, where $\oFF_q$ is the algebraic closure of $\FF_q$, and not just $\FF_q^t$.

\begin{lemma} \label{prob}
Suppose that $q > \binom{m}{2}$ and $d \geq m - 1$. If $f$ is a random $t$-variate polynomial of degree $d$ over $\mathbb{F}_q$ and $x_1, \dots, x_m$ are $m$ distinct points in $\oFF_{q}^t$, then
$$\mathbb{P}[f(x_i) = 0 \mbox{ for all } i = 1, \dots, m] \leq 1/q^{m}.$$ 
\end{lemma}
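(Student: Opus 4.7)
The plan is to recast the claim as a statement about an $\FF_q$-linear map. The evaluation map
\[
\mathrm{ev}\colon \cP_d \to \oFF_q^m, \qquad f \mapsto (f(x_1), \dots, f(x_m)),
\]
is $\FF_q$-linear, and its kernel is exactly the set of polynomials we wish to count. Since $|\cP_d| = |\ker(\mathrm{ev})| \cdot |\mathrm{image}(\mathrm{ev})|$, it suffices to show that the image has $\FF_q$-dimension at least $m$ inside $\oFF_q^m$ viewed as an $\FF_q$-vector space, as any such $\FF_q$-subspace has at least $q^m$ elements. Equivalently, I will produce $m$ polynomials in $\cP_d$ whose images under $\mathrm{ev}$ are $\FF_q$-linearly independent in $\oFF_q^m$.

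To build these $m$ polynomials, I would reduce to a univariate Vandermonde argument via a single separating linear form. The first step is to find $L \in \cP_1$ with coefficients in $\FF_q$ such that $L(x_1), \dots, L(x_m)$ are pairwise distinct elements of $\oFF_q$. For each pair $i \neq j$, the map $L \mapsto L(x_i) - L(x_j)$ is an $\FF_q$-linear functional on $\cP_1$, and it is nonzero, because $x_i \neq x_j$ forces some coordinate function $X_k$ to distinguish them. Its kernel is therefore a proper $\FF_q$-subspace of $\cP_1$, of size at most $|\cP_1|/q = q^t$, and so summing over the $\binom{m}{2}$ pairs produces at most $\binom{m}{2} q^t$ bad $L$'s---strictly less than $|\cP_1| = q^{t+1}$ by the hypothesis $q > \binom{m}{2}$. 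Hence a separating $L$ exists. This is the only step that uses the condition on $q$.

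With such an $L$ fixed, the polynomials $1, L, L^2, \dots, L^{m-1}$ all lie in $\cP_{m-1} \subseteq \cP_d$ (using $d \geq m-1$), and the $m$ vectors $\mathrm{ev}(L^k)$ for $k = 0, 1, \dots, m-1$ are the rows of the $m \times m$ Vandermonde matrix $\bigl(L(x_i)^k\bigr)$, whose determinant $\prod_{i<j}(L(x_j) - L(x_i))$ is nonzero in $\oFF_q$ because the $L(x_i)$ are distinct. So these vectors are $\oFF_q$-linearly independent and in particular $\FF_q$-linearly independent, delivering the required dimension bound. The one conceptually delicate step is the existence of the separating form: this is precisely where the hypothesis that the $x_i$ may lie in $\oFF_q^t$ rather than $\FF_q^t$ bites, since one cannot build $\FF_q$-valued Lagrange separators coordinate by coordinate. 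Routing everything through a single $L$ reduces the problem to one variable, where the Vandermonde identity handles $\oFF_q$-valued evaluations while the polynomial coefficients themselves stay in $\FF_q$.
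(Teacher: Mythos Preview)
Your argument is correct and follows essentially the same route as the paper: both proofs first locate an $\FF_q$-coefficient linear form separating the $x_i$ (the paper takes $L = X_1 + \sum_{j\ge 2} a_j X_j$ and counts bad $(a_2,\dots,a_t)$, you count bad $L\in\cP_1$), then reduce to the univariate fact that $1,L,\dots,L^{m-1}$ determine the evaluations --- the paper phrases this as Lagrange interpolation (for each $g$ there is at most one $h$), you phrase it as Vandermonde independence of the vectors $\mathrm{ev}(L^k)$, which are two sides of the same coin.
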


\begin{proof}
Let $x_i = (x_{i,1}, \dots, x_{i,t})$ for each $i = 1, \dots, m$. We choose elements $a_2, \dots, a_t  \in \mathbb{F}_q$ such that $x_{i,1} + \sum_{j=2}^t a_j x_{i, j}$ is distinct for all $i = 1, \dots, m$. To see that this is possible, note that there are exactly $\binom{m}{2}$ equations 
\[x_{i,1} + \sum_{j=2}^t a_j x_{i, j} = x_{i',1} + \sum_{j=2}^t a_j x_{i', j},\] 
each with at most $q^{t-2}$ solutions $(a_2, \dots, a_t)$. Therefore, since the total number of choices for $(a_2, \dots, a_t)$ is $q^{t-1}$ and $q^{t-1} > q^{t-2} \binom{m}{2}$, we can make an appropriate choice.

We now consider $\mathcal{P}'_d$, the set of polynomials of degree at most $d$ in $Z$, where $Z_1 = X_1 + \sum_{j=2}^t a_j X_j$ and $Z_j = X_j$ for all $2 \leq j \leq t$. Since this change of variables is an invertible linear map, $\mathcal{P}'_d$ is identical to $\mathcal{P}_d$. It will therefore suffice to show that a randomly chosen polynomial from $\mathcal{P}'_d$ passes through all of the points $z_1, \dots, z_m$ corresponding to $x_1, \dots, x_m$ with probability at most $q^{-m}$. For this, we will need the fact that, by our choice above, $z_{i,1} \neq z_{i',1}$ for any $1 \leq i < i' \leq m$.

For any $f$ in $\mathcal{P}'_d$, we may write $f = g + h$, where $h$ contains all monomials of the form $Z_1^j$ for $j = 0, 1, \dots, m-1$ and $g$ contains all other monomials. For any fixed choice of $g$, there is exactly one choice of $h$ with coefficients in $\oFF_{q}$ such that $f(z_i) = 0$ for all $i = 1, \dots, m$, namely, the unique polynomial of degree at most $m-1$ which takes the value $-g(z_i)$ at $z_{i,1}$ for all $i = 1, 2, \dots, m$, where uniqueness follows from the fact that the $z_{i,1}$ are distinct. Therefore, the number of choices for $h$ with coefficients in $\mathbb{F}_q$ is either $0$ or $1$. Since this is out of a total of $q^m$ possibilities, we see that the probability $f$ passes through all of the $z_i$ is at most $q^{-m}$, as required.
\end{proof}

Despite yielding quantitative results that were unavailable to earlier versions of the random algebraic method, our method relies on rather less input from algebraic geometry. Recall that a variety over an algebraically closed field $\oFF$ is a set of the form
\[W = \{x \in \oFF^t : f_1(x) = \dots = f_s(x) = 0\}\]
for some collection of polynomials $f_1, \dots, f_s : \oFF^t \rightarrow \oFF$. The variety is irreducible if it cannot be written as the union of two proper subvarieties. The dimension $\dim W$ of $W$ is then the maximum integer $d$ such that there exists a chain of irreducible subvarieties of $W$ of the form
\[\emptyset \subsetneq \{p\} \subsetneq W_1 \subsetneq W_2 \subsetneq \dots \subsetneq W_d \subset W,\]
where $p$ is a point. The following three standard lemmas about varieties will suffice for our purposes.

%Reference: https://math.stackexchange.com/questions/1713906/how-is-finiteness-of-solutions-zero-dimensionality-related-to-krulls-dimensio
%Proposition 3.7.1 of Kreuzer-Robbiano - Computational Commutative Algebra 1

\begin{lemma} \label{infinite}
Every variety $W$ over an algebraically closed field $\oFF$ with $\dim W \geq 1$ has infinitely many points.
\end{lemma}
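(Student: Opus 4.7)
The plan is to reduce the statement to the fact that an irreducible variety strictly containing a point must be infinite, and then derive a contradiction from finiteness by exhibiting a decomposition as a union of two proper subvarieties.

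Unpacking the definition of dimension, the hypothesis $\dim W \geq 1$ guarantees a chain $\emptyset \subsetneq \{p\} \subsetneq W_1 \subseteq W$ in which $W_1$ is an irreducible subvariety of $W$. Since $W_1 \subseteq W$, it suffices to show that $W_1$ is infinite. So the problem reduces to proving: an irreducible variety $W_1$ over $\overline{\mathbb{F}}$ that strictly contains at least one point is infinite.

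The core input I would use is that every finite subset of $\overline{\mathbb{F}}^t$ is itself a variety. A single point $(a_1, \dots, a_t) \in \overline{\mathbb{F}}^t$ is cut out by the linear polynomials $X_i - a_i$, and a finite union of varieties is again a variety (either by taking all products of one defining polynomial from each, or by observing that the union of zero sets is the zero set of the product ideal). Once this is in hand, the proof proceeds by contradiction: assume $W_1 = \{p_1, \dots, p_k\}$ is finite. Because the chain is strict at the step $\{p\} \subsetneq W_1$, we have $k \geq 2$, and then
\[
W_1 = \{p_1\} \cup \{p_2, \dots, p_k\}
\]
realizes $W_1$ as the union of two proper subvarieties, contradicting irreducibility.

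There is no real obstacle here; the argument is essentially a direct unwinding of the definitions together with the elementary observation that finite sets of points are varieties. The only mild subtlety is the bookkeeping that the $W_1$ produced by the dimension hypothesis is itself an irreducible subvariety (so the decomposition argument applies), which is immediate from the definition of dimension quoted in the excerpt.
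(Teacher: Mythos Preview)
Your argument is correct. The paper does not actually prove this lemma: it is listed among three ``standard lemmas about varieties'' that are quoted without proof, so there is no authorial argument to compare against. Your proof is a clean elementary one that works directly from the chain-of-irreducibles definition of dimension given in the paper, together with the facts that finite point sets are varieties and that an irreducible variety admits no nontrivial decomposition into proper subvarieties.
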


%Reference: https://math.stackexchange.com/questions/1718137/dimension-and-intersection-of-algebraic-varieties

\begin{lemma} \label{dimdrop}
Suppose that $W$ is an irreducible variety over an algebraically closed field $\oFF$. Then, for any polynomial $g: \oFF^t \rightarrow \oFF$, $W \subseteq \{x : g(x) = 0\}$ or $W \cap \{x : g(x) = 0\}$ is a variety of dimension less than $\dim W$.
\end{lemma}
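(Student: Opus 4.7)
The plan is to argue directly from the chain-based definition of dimension given in the excerpt, with the dichotomy in the lemma coming from whether $g$ vanishes identically on $W$ or not.

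First I would verify that $W \cap \{x : g(x) = 0\}$ is indeed a variety. Writing $W = \{x \in \oFF^t : f_1(x) = \dots = f_s(x) = 0\}$, the intersection is just $\{x : f_1(x) = \dots = f_s(x) = g(x) = 0\}$, the common zero set of one more polynomial. This takes care of the "if not contained, then still a variety" part of the statement.

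Now assume $W \not\subseteq \{x : g(x) = 0\}$ and set $Z = W \cap \{x : g(x) = 0\}$, so that $Z \subsetneq W$. To show $\dim Z < \dim W$, take a chain of irreducible subvarieties realizing $\dim Z$:
\[
\emptyset \subsetneq \{p\} \subsetneq W_1 \subsetneq W_2 \subsetneq \dots \subsetneq W_{\dim Z} \subseteq Z.
\]
Each $W_i$ is irreducible and contained in $Z \subseteq W$, hence is an irreducible subvariety of $W$. Since $W_{\dim Z} \subseteq Z \subsetneq W$, the inclusion $W_{\dim Z} \subsetneq W$ is strict, so we may append the irreducible variety $W$ itself to the top of the chain to obtain
\[
\emptyset \subsetneq \{p\} \subsetneq W_1 \subsetneq \dots \subsetneq W_{\dim Z} \subsetneq W,
\]
a chain of irreducible subvarieties of $W$ of length $\dim Z + 1$. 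By the definition of dimension, this forces $\dim W \geq \dim Z + 1$, i.e., $\dim Z < \dim W$, as required.

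There isn't really a hard step here; the content of the lemma is entirely in the definitions. The only subtlety worth flagging is the appeal to irreducibility of $W$, which is used precisely to guarantee that appending $W$ to the chain produces an irreducible top element and that $Z$ (containing $W_{\dim Z}$) is a proper subset of $W$ whenever $g$ does not vanish on $W$, so that the inclusion $W_{\dim Z} \subsetneq W$ really is strict. Aside from that, the proof is a one-line chain-extension argument.
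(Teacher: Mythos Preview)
Your argument is correct. Note, however, that the paper does not actually prove Lemma~\ref{dimdrop}: it is listed among ``three standard lemmas about varieties'' and stated without proof, so there is no paper proof to compare against. What you have written is precisely the standard one-line justification from the chain definition of dimension that the paper adopts, and it is exactly what one would supply if asked to fill in the details.
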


%Reference: https://terrytao.wordpress.com/2011/03/23/bezouts-inequality/
%https://math.stackexchange.com/questions/1115337/number-of-connected-components-of-a-real-variety
%Section 11 of https://www2.math.ethz.ch/education/bachelor/lectures/fs2015/math/alg_geom/lecturenotes
%Proposition 13 of https://www.ams.org/journals/jams/2013-26-04/S0894-0347-2013-00764-X/S0894-0347-2013-00764-X.pdf
%https://math.stackexchange.com/questions/1348589/basic-question-regarding-degrees-of-algebraic-sets
%Page 10 of Fulton's book Introduction to intersection theory in algebraic geometry

\begin{lemma}[B\'ezout's theorem~\cite{F84}] \label{bezout}
If, for a collection of polynomials $f_1, \dots, f_t : \oFF^t \rightarrow \oFF$, the variety
\[W = \{x \in \oFF^t : f_1(x) = \dots = f_t(x) = 0\}\]
has $\dim W = 0$, then 
\[|W| \leq \prod_{i=1}^t \deg(f_i).\]
Moreover, for a collection of polynomials $f_1, \dots, f_s : \oFF^t \rightarrow \oFF$, the variety
\[W = \{x \in \oFF^t : f_1(x) = \dots = f_s(x) = 0\}\]
has at most $\prod_{i=1}^s \deg(f_i)$ irreducible components.
\end{lemma}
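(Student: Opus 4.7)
The plan is to introduce a notion of \emph{degree} for subvarieties of $\oFF^t$ and to prove a uniform \emph{B\'ezout inequality} from which both statements will follow, closely paralleling the treatment in Fulton~\cite{F84}. For an irreducible variety $V \subseteq \oFF^t$ of dimension $d$, I would define $\deg(V)$ to be the number of intersection points of $V$ with a generic codimension-$d$ affine linear subspace, and extend additively to reducible varieties as the sum of degrees of the irreducible components.

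The central input needed is the B\'ezout inequality: for any variety $V = \set{x \in \oFF^t : f_1(x) = \dots = f_s(x) = 0}$ with each $f_i$ nonconstant, one has $\deg(V) \le \prod_{i=1}^s \deg(f_i)$. This is the hard step. The cleanest route I know passes through the projective closure $\ol{V} \subseteq \PP^t$ and the Hilbert polynomial of its homogeneous coordinate ring: the leading coefficient of this polynomial encodes $\deg(V)$, and intersecting with a hypersurface of degree $d$ either leaves $V$ unchanged (when $V$ is already contained in the hypersurface) or multiplies this leading coefficient by at most $d$, via the short exact sequence induced by multiplication by the defining form. Iterating over $f_1, \dots, f_s$, starting from $V_0 = \oFF^t$ whose degree is $1$, then yields the inequality. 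A more elementary alternative would proceed by generic projection and resultants, ultimately reducing to the fundamental theorem of algebra.

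Granted the B\'ezout inequality, both parts of the lemma follow immediately. For the first part, $\dim W = 0$ forces $\deg(W) = |W|$ by the definition of degree for a $0$-dimensional variety, so the inequality gives $|W| \le \prod_i \deg(f_i)$. For the second part, each irreducible component of $W$ contributes at least $1$ to $\deg(W)$ by additivity of the degree, so the number of components is at most $\deg(W) \le \prod_i \deg(f_i)$. Lemmas~\ref{infinite} and~\ref{dimdrop} would enter in verifying that the requisite generic affine subspaces exist and behave as expected in cutting dimension, but the main obstacle will undoubtedly be the B\'ezout inequality itself.
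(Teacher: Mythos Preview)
The paper does not prove this lemma at all: it is stated as one of ``three standard lemmas about varieties'' and simply attributed to Fulton~\cite{F84}, with no argument given. So there is no proof in the paper to compare against.

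Your outline is a reasonable sketch of the standard route---define degree via generic linear slices, pass to projective closure, and control the effect of intersecting with a hypersurface through the Hilbert polynomial (or, alternatively, via resultants)---and the deductions of both parts from the B\'ezout inequality are correct: a zero-dimensional variety has degree equal to its cardinality, and each irreducible component contributes a positive integer to the degree. That said, this is a substantial piece of classical algebraic geometry, and the paper's stance is precisely that one should cite it rather than reprove it. If you do want to flesh this out, be aware that the passage from the affine variety $W$ to its projective closure $\ol{W}$ requires some care (components at infinity, and the comparison of affine and projective degree), and that the short-exact-sequence argument for the Hilbert polynomial needs the hypersurface not to contain any top-dimensional component, which is exactly the dichotomy you already flagged.
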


\section{Proof of Theorem~\ref{thm:main}}

Fix $d = \lceil t^{1/(s-1)}\rceil - 1$ and $\ell = \lfloor \frac{1}{2d} q^{(d+1)/(s-1)} \rfloor$. Consider the bipartite graph between sets $U$ and $V$, where $V$ may be viewed as a copy of $\FF_q^s$ for some prime power $q$ and $U$ has order $\ell$, each vertex $u_i$ of which is associated to an $(s-1)$-variate polynomial $f_i$ of degree at most $d$ with coefficients in $\FF_q$. Each $u_i$ is then joined to the set of points 
$$S_i = \{(x_1, \dots, x_{s-1}, f_i(x_1, \dots, x_{s-1})) : x_1, \dots, x_{s-1} \in \FF_q\}$$
in $V$. Note that, for any $1 \leq j \leq s$ and $1 \leq i_1 < \dots < i_j \leq \ell$,
\[S_{i_1} \cap \cdots \cap S_{i_j} = \{(x_1, \dots, x_s) : x_s = f_{i_1}(x_1, \dots, x_{s-1}) = \dots = f_{i_j}(x_1, \dots, x_{s-1}) \}.\]
This intersection therefore has the same size as $T_{i_1, i_2} \cap \dots \cap T_{i_1, i_j}$, where 
\[T_{i, i'} = \{(x_1, \dots, x_{s-1}) : (f_i - f_{i'})(x_1, \dots, x_{s-1}) = 0\}.\]
Our aim now is to show that there is a choice of $f_i$ for $i = 1, \dots, \ell$ such that, for any $1 \leq j \leq s$ and $1 \leq i_1 < \dots < i_j \leq \ell$, the intersection $T_{i_1, i_2} \cap \dots \cap T_{i_1, i_j}$, considered as a variety over $\oFF_q$, has dimension at most $s-j$. To do this, we will pick the $f_i$ in sequence and show, by induction, that for every $1 \leq k \leq \ell$, there exist $f_1, \dots, f_k$ such that $T_{i_1, i_2} \cap \dots \cap T_{i_1, i_j}$ has dimension at most $s - j$ for any $1 \leq j \leq s$ and $1 \leq i_1 < \dots < i_j \leq k$.

To begin the induction, we let $f_1$ be any $(s-1)$-variate polynomial  of degree $d$. In this case, the condition that the intersection $T_{i_1, i_2} \cap \dots \cap T_{i_1, i_j}$ have dimension at most $s-j$ for all $1 \leq j \leq s$ and $1 \leq i_1 < \dots < i_j \leq k$ is degenerate, but can be meaningfully replaced by the observation that the set of all $(x_1, \dots, x_{s-1})$, corresponding to the trivial intersection, equals $\oFF_q^{s-1}$, which has dimension $s-1$, as required.

Suppose now that $f_1, \dots, f_{k-1}$ have been chosen consistent with the induction hypothesis. We would like to pick $f_k$ so that for any $1 \leq j \leq s$ and $1 \leq i_1 < \dots < i_{j-1} < k$, the intersection $T_{i_1, i_2} \cap \dots \cap T_{i_1, i_{j-1}} \cap T_{i_1, k}$ has dimension at most $s-j$. For now, fix $1 \leq j \leq s$ and $1 \leq i_1 < \dots < i_{j-1} < k$ and note, by the induction hypothesis, that $T_{i_1, i_2} \cap \dots \cap T_{i_1, i_{j-1}}$ has dimension at most $s - j + 1$. 

Split the variety $T_{i_1, i_2} \cap \dots \cap T_{i_1, i_{j-1}}$ into irreducible components $W_1, \dots, W_r$ and suppose that $W_a$ is a component of dimension $s - j + 1 \geq 1$. By Lemma~\ref{infinite}, $W_a$ has infinitely many points. Fix $d+1$ points $w_1, \dots, w_{d+1}$ on $W_a$. For any $(s-1)$-variate polynomial $f$, write
$$T_{i_1, f} = \{(x_1, \dots, x_{s-1}) : (f - f_{i_1})(x_1, \dots, x_{s-1}) = 0\}.$$
By Lemma~\ref{dimdrop}, we see that if $\dim W_a \cap T_{i_1, f} = \dim W_a$, then $T_{i_1, f}$ must contain all of $W_a$ and, in particular, each of $w_1, \dots, w_{d+1}$. Therefore, for a random $(s-1)$-variate polynomial $f$ of degree $d$, the probability that $W_a \cap T_{i_1, f}$ does not have dimension at most $s - j$ is at most the probability that the polynomial $f - f_{i_1}$ passes through all of $w_1, \dots, w_{d+1}$, which, by Lemma~\ref{prob}, is at most $q^{-(d+1)}$.

Since, by Lemma~\ref{bezout}, the number of irreducible components of $T_{i_1, i_2} \cap \dots \cap T_{i_1, i_{j-1}}$ is at most $d^{s-1}$, this implies that the probability $T_{i_1, i_2} \cap \dots \cap T_{i_1, i_{j-1}} \cap T_{i_1,f}$ does not have dimension at most $s - j$ is at most $d^{s-1} q^{-(d+1)}$. By taking a union bound over the at most $\ell^{s-1}$ choices for $j$ and $i_1, \dots, i_{j-1}$, we see that the probability there exists $1 \leq j \leq s$ and $1 \leq i_1 < \dots < i_{j-1} < k$ such that $T_{i_1, i_2} \cap \dots \cap T_{i_1, i_{j-1}} \cap T_{i_1,f}$ does not have dimension at most $s- j$ is at most $\ell^{s-1} d^{s-1} q^{-(d+1)} < 1$ for $q$ sufficiently large. Therefore, there exists an $(s-1)$-variate polynomial $f$ of degree at most $d$ such that $T_{i_1, i_2} \cap \dots \cap T_{i_1, i_{j-1}} \cap T_{i_1,f}$ has dimension at most $s - j$ for any $1 \leq j \leq s$ and $1 \leq i_1 < \dots < i_{j-1} < k$, so taking $f_k = f$ completes the induction.

To conclude the proof of Theorem~\ref{thm:main}, we note that for any $1 \leq i_1 < \dots < i_s \leq \ell$ the intersection $T_{i_1, i_2} \cap \dots \cap T_{i_1, i_{s}}$ has dimension zero, so, by B\'ezout's theorem, Lemma~\ref{bezout}, the number of points in the intersection is at most $d^{s-1} < t$. Therefore, for any $1 \leq i_1 < \dots < i_s \leq \ell$, the intersection $S_{i_1} \cap \dots \cap S_{i_s}$ has at most $t-1$ points, so there is no copy of $K_{s,t}$ with $s$ vertices in $U$ and $t$ vertices in $V$. Since $|U| = \ell = \Omega(q^{(d+1)/(s-1)})$, $|V| = q^s$ and $|E| = \ell q^{s-1}$, we thus have, for $m_0 \coloneqq m_0(n) = n^{(d+1)/s(s-1)} \geq n^{t^{1/(s-1)}/s(s-1)}$, that
\[z(m_0, n; s, t) = \Omega(m_0 n^{1 - 1/s})\]
when $n$ is of the form $q^s$ with $q$ a prime power. By applying Bertrand's postulate, we can easily extend this result to all $n$. Finally, for any $m \leq m_0$, we can verify that $z(m,n;s, t) = \Omega(m n^{1-1/s})$ by choosing a random subset $U'$ of $U$ of order $m$ and noting that the expected number of edges between $U'$ and $V$ is $\Omega(m n^{1 - 1/s})$. Therefore, there must exist some choice for $U'$ such that the number of edges between $U'$ and $V$ is $\Omega(m n^{1 - 1/s})$, demonstrating the required lower bound. 

\vspace{3mm}
\noindent
{\bf Remark.}
It is worth remarking that the method becomes a little simpler if we instead consider
$$S_i = \{x \in \FF_q^s : f_i(x) = 0\}$$
for random $s$-variate polynomials $f_i$ of degree at most $d$ with coefficients in $\FF_q$. There is, however, a small tradeoff in the bound, in that it only allows us to determine a tight bound for $z(m, n; s,t)$ for $m \leq n^{t^{1/s}/s(s-1)}$ rather than $m \leq n^{t^{1/(s-1)}/s(s-1)}$. The difference is quite similar to the difference between the norm graphs of~\cite{KRSz} and the projective norm graphs of~\cite{ARSz}.

\section{Concluding remarks}

The result of Alon, Mellinger, Mubayi and Verstra\"ete, Theorem~\ref{thm:2case}, implies that $z(n, n^{t/2}; t, t) \geq z(n, n^{t/2}; t, 2) = \Omega(n^{(t+1)/2})$. By replacing $n$ with $n^{2/t}$, we see that $z(n^{2/t}, n; t, t) = \Omega(n^{1 + 1/t})$, which agrees with the upper bound up to a constant. By the usual sampling argument, this yields the following corollary.

\begin{corollary} \label{cor:2case}
For any fixed $t \geq 2$ and any $m \leq n^{2/t}$,
\[z(m, n; t, t) = \Omega(m n^{1 - 1/t}).\]
\end{corollary}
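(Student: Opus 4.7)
The plan is to leverage the bound $z(n^{2/t}, n; t, t) = \Omega(n^{1 + 1/t})$ established in the preceding paragraph and then pass to smaller values of $m$ by a standard random sampling trick.

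First, I would fix $n$ of the form $q^2$ (with $q$ a prime power) and apply Theorem~\ref{thm:2case} to produce a bipartite graph with parts $A$ of size $n^{t/2}$ and $B$ of size $n$, containing no copy of $K_{2,t}$ (with the $2$-side in $A$ and the $t$-side in $B$) and with $\Omega(n^{t/2} \cdot n^{1/2}) = \Omega(n^{(t+1)/2})$ edges. Swapping the roles of the two sides, the same graph is $K_{t,2}$-free with the $t$-side in the part of size $n$; since any copy of $K_{t,t}$ contains a copy of $K_{t,2}$, this graph is in particular $K_{t,t}$-free. Renaming via $n \mapsto N^{2/t}$, this yields a $K_{t,t}$-free bipartite graph with parts $U$ of size $|U| = N^{2/t}$ and $V$ of size $|V| = N$ and with $|E| = \Omega(N^{1 + 1/t})$ edges. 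Bertrand's postulate lets me drop the restriction that $N^{2/t}$ be of the form $q^2$, as in the proof of Theorem~\ref{thm:main}.

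Next, given any $m \leq N^{2/t}$, I would select a uniformly random subset $U' \subseteq U$ of size $m$ and consider the bipartite subgraph induced on $U' \cup V$. Since $K_{t,t}$-freeness is preserved under taking subgraphs, this induced subgraph is again $K_{t,t}$-free. The expected number of edges between $U'$ and $V$ is
\[
|E| \cdot \frac{m}{|U|} \;=\; \Omega\!\paren{N^{1 + 1/t}} \cdot \frac{m}{N^{2/t}} \;=\; \Omega\!\paren{m \cdot N^{1 - 1/t}},
\]
so some choice of $U'$ achieves at least this many edges, giving $z(m, N; t, t) = \Omega(m N^{1 - 1/t})$, as required.

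I do not anticipate any real obstacle: all the substantive work is already packaged in Theorem~\ref{thm:2case}, and the only points requiring care are the routine handling of integer parts of $N^{2/t}$ and the use of Bertrand's postulate to pass from prime-power $N$ to arbitrary $N$, both of which are standard and parallel to the end of the proof of Theorem~\ref{thm:main}.
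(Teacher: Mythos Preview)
Your proposal is correct and follows essentially the same route as the paper: use Theorem~\ref{thm:2case} at $m = n^{t/2}$, swap sides to get $z(n, n^{t/2}; t, t) \geq z(n, n^{t/2}; t, 2) = \Omega(n^{(t+1)/2})$, rescale via $n \mapsto N^{2/t}$, and then sample down. The only superfluous step is restricting to $n = q^2$ and invoking Bertrand's postulate---Theorem~\ref{thm:2case} is already stated for arbitrary $n$, so no prime-power restriction arises here.
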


That is, there is an asymptotically tight estimate for the Zarankiewicz problem when $s = t$ and one side is much larger than the other (and, crucially, in the non-trivial range where $m = \omega(n^{1/t})$ and $z(m, n; t, t) = \omega(n)$). A tight lower bound for $z(m, n; s, t)$ for all $m \leq n^{t/s}$ would similarly provide a tight lower bound for $z(m, n; t, t)$ for all $m \leq n^{s/t}$. However, we believe this will be difficult, or perhaps even impossible, to achieve for $s \geq 3$. In particular, the first open case of Question~\ref{qn:main}, where $s = 3$ and $t = 4$, seems hard.

\vspace{3mm}
\noindent
{\bf Acknowledgements.} I would like to thank Cosmin Pohoata for helpful discussions. I am also grateful to Dhruv Mubayi for drawing my attention to his work with Alon, Mellinger and Verstra\"ete~\cite{AMMV12}.


\begin{thebibliography}{}

\bibitem{AMMV12}
N. Alon, K. E. Mellinger, D. Mubayi and J. Verstra\"ete, The de Bruijn--Erd\H{o}s theorem for hypergraphs, {\it Des. Codes Cryptogr.} {\bf 65} (2012), 233--245.

\bibitem{ARSz}
N. Alon, L. R\'onyai and T. Szab\'o, Norm-graphs: variations and applications, {\it J. Combin. Theory Ser. B} {\bf 76} (1999), 280--290.

\bibitem{BBK}
P. V. M. Blagojevi\'c, B. Bukh and R. Karasev, Tur\'an numbers for $K_{s,t}$-free graphs: topological obstructions and algebraic constructions, {\it Israel J. Math.} {\bf 197} (2013), 199--214.

\bibitem{Bro66}
W. G. Brown, On graphs that do not contain a Thomsen graph, {\it Canad. Math. Bull.} {\bf 9} (1966), 281--285.

\bibitem{B15}
B. Bukh, Random algebraic construction of extremal graphs, {\it Bull. Lond. Math. Soc.} {\bf 47} (2015), 939--945.

\bibitem{BC}
B. Bukh and D. Conlon, Rational exponents in extremal graph theory, {\it J. Eur. Math. Soc.} {\bf 20} (2018), 1747--1757.

\bibitem{BG}
B. Bukh and X. Goaoc, Shatter functions with polynomial growth rates, {\it SIAM J. Discrete Math.} {\bf 33} (2019), 784--794.

\bibitem{Con}
D. Conlon, Graphs with few paths of prescribed length between any two vertices, {\it Bull. Lond. Math. Soc.} {\bf 51} (2019), 1015--1021.

\bibitem{CJL}
D. Conlon, O. Janzer and J. Lee, More on the extremal number of subdivisions, to appear in {\it Combinatorica}.

\bibitem{CT}
D. Conlon and M. Tyomkyn, Repeated patterns in proper colourings, preprint available at arXiv:2002.00921 [math.CO].

\bibitem{Erd38}
P. Erd\H{o}s, On sequences of integers no one of which divides the product of two others and on some related problems, {\it Mitt. Forsch.-Inst. Math. Mech. Univ. Tomsk} {\bf 2} (1938), 74--82.

\bibitem{E81} 
P. Erd\H{o}s, On the combinatorial problems which I would most like to see solved, {\it Combinatorica} {\bf 1} (1981), 25--42.

\bibitem{F84}
W. Fulton, Introduction to intersection theory in algebraic geometry, CBMS Regional Conference Series in Mathematics, 54, American Mathematical Society, Providence, RI, 1984.

\bibitem{Jan}
O. Janzer, The extremal number of the subdivisions of the complete bipartite graph, {\it SIAM J. Discrete Math.} {\bf 34} (2020), 241--250.

\bibitem{JJM}
T. Jiang, Z. Jiang and J. Ma, Negligible obstructions and Tur\'an exponents, preprint available at arXiv:2007.02975 [math.CO].

\bibitem{JMY}
T. Jiang, J. Ma and L. Yepremyan, On Tur\'an exponents of bipartite graphs, preprint available at arXiv: 1806.02838 [math.CO].

\bibitem{JQ}
T. Jiang and Y. Qiu, Many Tur\'an exponents via subdivisions, preprint available at arXiv:1908.02385 [math.CO].

\bibitem{KKL18}
D. Y. Kang, J. Kim and H. Liu, On the rational Tur\'an exponents conjecture, {\it J. Combin. Theory Ser. B} {\bf 148} (2021), 149--172.

\bibitem{KRSz} 
J. Koll\'ar, L. R\'onyai and T. Szab\'o, Norm-graphs and bipartite Turán numbers, {\it Combinatorica} {\bf 16} (1996), 399--406.

\bibitem{KST54}
T. K\H{o}v\'ari, V. T. S\'os and P. Tur\'an, On a problem of K. Zarankiewicz, {\it Colloq. Math.} {\bf 3} (1954), 50--57.

\bibitem{Mat}
J. Matou\v sek, On discrepancy bounds via dual shatter function, {\it Mathematika} {\bf 44} (1997), 42--49.

\bibitem{Z51}
K. Zarankiewicz, Problem 101, {\it Colloq. Math.} {\bf 2}  (1951), 301.

\end{thebibliography}
\end{document}